\documentclass[times,sort&compress,3p]{elsarticle}
\journal{}

\RequirePackage{amsthm,amsmath,amsfonts,amssymb}
\RequirePackage[colorlinks,citecolor=blue,urlcolor=blue]{hyperref}
\RequirePackage{graphicx}
\RequirePackage{bm}
\RequirePackage{color}
\RequirePackage{calrsfs}
\RequirePackage{enumitem}
\setitemize{noitemsep,topsep=0pt,parsep=0pt,partopsep=0pt}   

\makeatletter
\def\ps@pprintTitle{%
 \let\@oddhead\@empty
 \let\@evenhead\@empty
 \def\@oddfoot{}%
 \let\@evenfoot\@oddfoot}
\makeatother

\usepackage[labelfont=bf]{caption}

\numberwithin{equation}{section}

\theoremstyle{plain}
\newtheorem{prop}{Proposition}[section]

\newtheorem{thm}[prop]{Theorem}

\newtheorem{lem}[prop]{Lemma}
\newtheorem{cond}[prop]{Condition}
\theoremstyle{remark}



\newcommand{\eps}{\varepsilon}
\newcommand{\N}{\mathbb{N}}

\newcommand{\R}{\mathbb{R}}

\newcommand{\Cb}{\mathbb{C}}

\newcommand{\Bc}{\mathcal{B}}

\newcommand{\Xc}{\mathcal{X}}

\newcommand{\dd}{\mathrm{d}}

\newcommand{\Ex}{\mathbb{E}}
\newcommand{\Var}{\mathrm{Var}}

\newcommand{\1}{\mathbf{1}}

\renewcommand{\Pr}{\mathbb{P}}

\newcommand{\sss}[1]{\scriptscriptstyle{#1}}
\newcommand{\p}{\overset{\sss{p}}{\to}}
\newcommand{\as}{\overset{\sss a.s.}{\to}}

\allowdisplaybreaks

\begin{document}

\begin{frontmatter}
  
\title{On Stute's representation for a class of smooth, possibly data-adaptive empirical copula processes}

\author{Ivan Kojadinovic\corref{mycorrespondingauthor}}

\address{CNRS / Universit\'e de Pau et des Pays de l'Adour / E2S UPPA, Laboratoire de math\'ematiques et applications -- IPRA, UMR 5142, B.P. 1155, 64013 Pau Cedex, France.}

\cortext[mycorrespondingauthor]{Email address: \url{ivan.kojadinovic@univ-pau.fr}}

\begin{abstract}
Given a random sample from a continuous multivariate distribution, Stute's representation is obtained for empirical copula processes constructed from a broad class of smooth, possibly data-adaptive nonparametric copula estimators. The latter class contains for instance empirical Bernstein copulas introduced by Sancetta and Satchell and thus the empirical beta copula proposed by Segers, Sibuya and Tsukahara. The almost sure rate in Stute's representation is expressed in terms of a parameter controlling the speed at which the spread of the smoothing region decreases as the sample size increases. 
\end{abstract}

\begin{keyword}
almost sure rate \sep  
data-adaptive smooth empirical copulas \sep
Stute's representation.
\end{keyword}

\end{frontmatter}


\section{Introduction}

Let $\Xc_n = (\bm X_1,\dots,\bm X_n)$ be a stretch of independent and identically distributed $d$-dimensional random vectors whose unknown distribution function (d.f.) $F$ is assumed to have continuous univariate margins $F_1,\dots,F_d$. From~\cite{Skl59}, the multivariate d.f.\ $F$ can be expressed as
\begin{equation}
\label{eq:sklar}
F(\bm x) = C\{F_1(x_1),\dots,F_d(x_d)\}, \qquad \bm x \in \R^d,
\end{equation}
in terms of a unique \emph{copula} $C$, that is, a unique $d$-dimensional d.f.\ with standard uniform margins which can be thought of as controlling the dependence between the $d$ components of the random vectors in $\Xc_n$. Applications of representation~\eqref{eq:sklar} are numerous: see, e.g., \cite{SalDeMKotRos07,McNFreEmb15,HofKojMaeYan18}.

The best-known nonparametric estimator of $C$ is the \emph{empirical copula} of $\Xc_n$ which we shall define as the empirical d.f.\ of the multivariate ranks obtained from $\Xc_n$ scaled by $1/n$ \cite{Rus76}. Specifically, for any $j \in \{1,\dots,d\}$, let $F_{n,j}$ be the empirical d.f.\ of the $j$th component sample $X_{1j},\dots,X_{nj}$ of $\Xc_n$. Then, $R_{ij} = n F_{n,j}(X_{ij})$ is the rank of $X_{ij}$ among $X_{1j},\dots,X_{nj}$. Next, let $\bm R_i =  (R_{i1}, \dots, R_{id})$, $i \in \{1,\dots,n\}$, be the multivariate ranks obtained from $\Xc_{n}$. The empirical copula $C_n$ of $\Xc_n$ is then defined by
\begin{equation}
\label{eq:Cn}
C_n(\bm u) = \frac{1}{n} \sum_{i=1}^n  \prod_{j=1}^d \1\left( R_{ij}^{1:n} / n \leq u_j \right) 
= \frac{1}{n} \sum_{i=1}^n  \1(\bm R_i / n \leq \bm u), \qquad \bm u \in [0,1]^d,
\end{equation}
where inequalities between vectors are to be understood componentwise. Note that the uniform distance between $C_n$ and the well-known alternative definition due to \cite{Deh79} is smaller than $d/n$, which implies that both definitions are interchangeable in the forthcoming asymptotic results.

The use of $C_n$ to carry out inference on the unknown $C$ in~\eqref{eq:sklar} requires the study of the asymptotics of the empirical copula process $\Cb_n$ defined by $\Cb_n(\bm u) = \sqrt{n} \{C_n(\bm u) - C(\bm u) \}$, $\bm u \in [0,1]^d$; see, e.g., \cite{GanStu87,FerRadWeg04,Tsu05}. The most general results are due to Segers \cite{Seg12} who considered the following non-restrictive condition.

\begin{cond}[Smooth partial derivatives]
\label{cond:pd:smooth}
For any $j \in \{1,\dots,d\}$, the partial derivative $\dot C_j = \partial C/\partial u_j$ exists and is continuous on the set $V_{d,j} = \{ \bm u \in [0, 1]^d : u_j \in (0,1) \}$.
\end{cond}

In the rest of this note, for any $j \in \{1,\dots,d\}$, $\dot C_j$ is arbitrarily defined to be zero on the set $\{ \bm{u} \in [0, 1]^d : u_j \in \{0,1\} \}$, which implies that, under Condition~\ref{cond:pd:smooth}, $\dot C_j$ is defined on the whole of $[0, 1]^d$. Furthermore, let $\bm U_i = (U_{i1},\dots,U_{id})$, $i \in \{1,\dots,n\}$, be the unobservable random vectors obtained by the probability integral transformations $U_{ij} = F_j(X_{ij})$, $j \in \{1,\dots,d\}$. Let $G_n$ be the empirical d.f.\ of $\bm U_1,\dots,\bm U_n$ (which is a random sample from $C$) and let $G_{n,j}$, $j \in \{1,\dots,d\}$, be its $d$ univariate margins. The corresponding empirical processes, $\alpha_n$ and $\alpha_{n,j}$, are respectively defined by $\alpha_n(\bm u) = \sqrt{n} \{G_n(\bm u) - C(\bm u)\}$ and $\alpha_{n,j}(u_j) = \sqrt{n} \{G_{n,j}(u_j) - u_j\}$, $\bm u \in [0,1]^d$. As we continue, convergences are as $n \to \infty$.

One of the main results obtained in \cite{Seg12} (see Proposition 3.1 therein) is that, under Condition~\ref{cond:pd:smooth}, 
\begin{equation}
  \label{eq:tilde:Cbn}
\sup_{\bm u \in [0,1]^d} | \Cb_n(\bm u) -\tilde \Cb_n(\bm u)  | \p 0, \qquad \text{where} \qquad  \tilde \Cb_n(\bm u) = \alpha_n(\bm u) - \sum_{j=1}^d \dot C_j(\bm u) \alpha_{n,j}(u_j), \qquad \bm u \in [0,1]^d,
\end{equation}
which implies weak convergence of the empirical copula process $\Cb_n$ to the usual well-identified limit in the literature; see, e.g., \cite{GanStu87,FerRadWeg04,Tsu05,Seg12}. The convergence result in~\eqref{eq:tilde:Cbn} is also instrumental for deriving and asymptotically validating resampling schemes for approximating the ``sampling distribution'' of $C_n$ in~\eqref{eq:Cn}; see, e.g., \cite{RemSca09,Seg12,KojSte19}.

Stute's representation of $\Cb_n$, conjectured in Section~4 of~\cite{Stu84}, is a strengthening of the convergence in probability in~\eqref{eq:tilde:Cbn}. Its proof was given in \cite{Seg12} under certain growth conditions on the second-order partial derivatives of $C$ that allow for explosive behavior near the boundaries. 

\begin{cond}[Smooth second-order partial derivatives]
  \label{cond:so:pd}
  For any $i,j \in \{1,\dots,d\}$, the second-order partial derivative $\ddot C_{ij} = \partial^2 C/(\partial u_i \partial u_j)$ exists and is continuous on the set $V_{d,i} \cap V_{d,j}$, and there exists a constant $L > 0$ such that
$$
|\ddot C_{ij}(\bm u)| \leq L \min \left( \frac{1}{u_i(1-u_i)}, \frac{1}{u_j(1-u_j)} \right), \qquad \bm u \in V_{d,i} \cap V_{d,j}.
$$
\end{cond}

Proposition 4.2 of \cite{Seg12} then states that, under Conditions~\ref{cond:pd:smooth} and~\ref{cond:so:pd},
\begin{equation}
  \label{eq:asr:Cbn}
  \sup_{\bm u \in [0,1]^d} | \Cb_n(\bm u) -\tilde \Cb_n(\bm u)  | = O(n^{-1/4} (\log n)^{1/2} (\log \log n)^{1/4}) \qquad \text{almost surely}.  
\end{equation}
Several applications of~\eqref{eq:asr:Cbn} are discussed in Section~4 of~\cite{Stu84}. Additional applications concern \emph{open-end sequential change-point detection};  see, e.g., \cite{KirWeb18,GosKleDet21,HolKoj21}. For instance, one way of establishing the asymptotics of certain procedures of this type based on $C_n$ in~\eqref{eq:Cn} (for monitoring changes in the copula) requires among other things to prove that $\sup_{m > n} Y_m \p 0$, where $Y_n = n^{-1/2} \max_{1 \leq k \leq n} \sup_{\bm u \in [0,1]^d} k^{1/2} |\Cb_k(\bm u) - \tilde \Cb_k(\bm u)|$. Showing the latter is highly none-trivial in general, but if Stute's representation for $\Cb_n$ holds, it follows from the fact that $Y_n \as 0$ as a consequence of~\eqref{eq:asr:Cbn}.

The aim of this note is to obtain Stute's representation for the smooth, possibly data-adaptive empirical copula processes recently considered in \cite{KojYi22}. The latter processes are based on smooth nonparametric estimators of $C$ that can be substantially better-behaved than $C_n$ in~\eqref{eq:Cn} in finite samples. Specifically, assume that, for any $n \in \N$:
  \begin{itemize}
  \item  for any $\bm x \in (\R^d)^n$ and $\bm u \in [0,1]^d$, $\nu_{\bm u}^{\bm x}$ is the law of a $[0,1]^d$-valued mean $\bm u$ random vector $\bm W_{\bm u}^{\bm x}$ the components of which are denoted by $W_{1,u_1}^{\bm x}, \dots, W_{d,u_d}^{\bm x}$ to indicate that the $j$th component depends on $u_j$ but not on $u_1,\dots,u_{j-1},u_{j+1},\dots,u_d$,
  \item for any $\bm u \in [0,1]^d$ and $S \in \Bc([0,1]^d)$, $\bm x \mapsto \nu_{\bm u}^{\bm x}(S)$ is a measurable function from $\big( (\R^d)^n,  \Bc((\R^d)^n) \big)$ to $\big( [0,1], \Bc([0,1])\big)$.
  \end{itemize}
A broad class of smooth versions of $C_n$ in~\eqref{eq:Cn}, with possibly data-adaptive smoothing, is then given by
\begin{equation}
  \label{eq:Cn:nu}
  C_n^\nu(\bm u) = \int_{[0,1]^d} C_n(\bm w) \dd \nu_{\bm u}^{\sss \Xc_n}(\bm w), \qquad \bm u \in [0,1]^d.
\end{equation}
Roughly speaking, for any $\bm u \in [0,1]^d$, $C_n^\nu(\bm u)$ can be thought of as a ``weighted average'' of $C_n(\bm w)$ for $\bm w$ ``in a neighborhood of $\bm u$'' according to the smoothing distribution $\nu_{\bm u}^{\sss \Xc_n}$ (that may depend on the observations $\Xc_n$). \emph{Empirical Bernstein copulas} introduced in \cite{SanSat04} and the \emph{empirical beta copula} proposed in \cite{SegSibTsu17} belong the above-defined class; see \cite{KojYi22} for more details as well as \cite{SegSibTsu17} on which the aforementioned reference heavily relies.

\section{Main result}

The smooth empirical copula process $\Cb_n^\nu$ corresponding to $C_n^\nu$ in~\eqref{eq:Cn:nu} is defined by $\Cb_n^\nu(\bm u) = \sqrt{n} \{ C_n^\nu(\bm u) - C(\bm u) \}$, $\bm u \in [0,1]^d$. Building upon \cite{SegSibTsu17}, its asymptotics were obtained in \cite{KojYi22} under a condition similar to the following one.

\begin{cond}[Variance condition]
  \label{cond:var:W}
  There exists constants $\kappa > 0$ and $\gamma \in [1,2)$ such that, for any $n \in \N$, $\bm x \in (\R^d)^n$, $\bm u \in [0,1]^d$ and $j \in \{1,\dots,d\}$, $\Var( W_{j,u_j}^{\bm x}) \leq \kappa u_j(1-u_j) / n^\gamma$.\end{cond}

Clearly, if Condition~\ref{cond:var:W} holds with $\gamma = \gamma_0 \in (1,2)$, it holds for any $\gamma \in [1,\gamma_0)$. Theorem~6.4 in \cite{KojYi22} implies that, if Condition~\ref{cond:pd:smooth} and Condition~\ref{cond:var:W} with $\gamma = 1$ hold, $\sup_{\bm u \in [0,1]^d} | \Cb_n^\nu(\bm u) -\tilde \Cb_n(\bm u)  | \p 0$, where $\tilde \Cb_n$ is defined in~\eqref{eq:tilde:Cbn}.  Compared to \cite{KojYi22}, we consider the above more precise formulation of the variance condition in order to express the almost sure rate in Stute's representation for $\Cb_n^\nu$ in terms of the rate at which the spread of the smoothing distributions decreases.

\begin{thm}
  \label{thm:asr}
  Under Conditions~\ref{cond:pd:smooth},~\ref{cond:so:pd} and~\ref{cond:var:W},
\begin{equation}
  \label{eq:asr}
  \sup_{\bm u \in [0,1]^d} | \Cb_n^\nu(\bm u) - \tilde \Cb_n(\bm u)| = O(n^{(3 - 4\gamma)/6}) + O(n^{-1/4} (\log n)^{1/2} (\log \log n)^{1/4}) + O(n^{-\gamma/5} (\log n)^{1/2} (\log \log n)^{1/2})\quad \text{almost surely},
\end{equation}
where $\gamma$ is the constant appearing in Condition~\ref{cond:var:W}.
\end{thm}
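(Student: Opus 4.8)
The plan is to split the quantity of interest into a smoothing error and Segers' remainder,
\[
\Cb_n^\nu(\bm u) - \tilde \Cb_n(\bm u) = \sqrt{n}\{C_n^\nu(\bm u) - C_n(\bm u)\} + \{\Cb_n(\bm u) - \tilde \Cb_n(\bm u)\},
\]
and to bound each supremum separately. The second bracket is dealt with at once by Proposition~4.2 of \cite{Seg12}, i.e.\ by~\eqref{eq:asr:Cbn}, which contributes the $O(n^{-1/4}(\log n)^{1/2}(\log\log n)^{1/4})$ term; this is where Conditions~\ref{cond:pd:smooth} and~\ref{cond:so:pd} feed into the known theory. All remaining work concerns the smoothing error, which I would rewrite, using that $\nu_{\bm u}^{\sss \Xc_n}$ has mean $\bm u$ and that $C_n = C + n^{-1/2}\Cb_n$, as
\[
\sqrt{n}\{C_n^\nu(\bm u) - C_n(\bm u)\} = \underbrace{\sqrt{n}\int_{[0,1]^d}\{C(\bm w)-C(\bm u)\}\,\dd\nu_{\bm u}^{\sss \Xc_n}(\bm w)}_{\text{bias}} + \underbrace{\int_{[0,1]^d}\{\Cb_n(\bm w)-\Cb_n(\bm u)\}\,\dd\nu_{\bm u}^{\sss \Xc_n}(\bm w)}_{\text{fluctuation}}.
\]

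For the bias term I would expand $C$ to second order about $\bm u$; the first-order contribution vanishes because $\Ex(\bm W_{\bm u}^{\sss \Xc_n}) = \bm u$, leaving a term governed by $\ddot C_{ij}$ and by the (co)variances of the components of $\bm W_{\bm u}^{\sss \Xc_n}$, the latter controlled by Condition~\ref{cond:var:W} (with covariances following by Cauchy--Schwarz). The difficulty is that Condition~\ref{cond:so:pd} only bounds $|\ddot C_{ij}|$ up to a factor that explodes near the boundary, so I would split $[0,1]^d$ into an interior region $\{\bm u : u_j \in [a,1-a]\ \forall j\}$, where $|\ddot C_{ij}| \lesssim 1/a$ yields a bias of order $n^{1/2-\gamma}/a$, and a boundary layer of width $a$, where the near-boundary variances are themselves of order $a\,n^{-\gamma}$, yielding a bound of order $n^{(1-\gamma)/2}\sqrt{a}$. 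Balancing the two at $a \asymp n^{-\gamma/3}$ produces the $O(n^{(3-4\gamma)/6})$ term. The variation of the continuous functions $\dot C_j$ over the smoothing neighbourhood, which arises once $\Cb_n$ is replaced by $\tilde \Cb_n$ in the fluctuation term, is of the same nature and would be routed into this Condition~\ref{cond:so:pd}-governed estimate.

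For the fluctuation term I would replace $\Cb_n$ by $\tilde \Cb_n$ (incurring once more the error in~\eqref{eq:asr:Cbn}, already accounted for) and then control the increments of $\alpha_n$ and of the $\alpha_{n,j}$ over the random neighbourhood on which $\nu_{\bm u}^{\sss \Xc_n}$ concentrates. Since Condition~\ref{cond:var:W} forces the spread of that neighbourhood to be of order $n^{-\gamma/2}$, I would truncate the displacement $\bm W_{\bm u}^{\sss \Xc_n} - \bm u$ at a level $a$: on the truncation event the increment is bounded by an almost-sure oscillation modulus of Stute type, of order $(a\log(1/a))^{1/2}$, while Chebyshev's inequality bounds the complementary event by $n^{-\gamma}/a^2$, multiplied by the almost-sure size $(\log\log n)^{1/2}$ of $\|\alpha_n\|_\infty$ and $\|\alpha_{n,j}\|_\infty$ from the law of the iterated logarithm. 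Balancing at $a \asymp n^{-2\gamma/5}$ gives the $O(n^{-\gamma/5}(\log n)^{1/2}(\log\log n)^{1/2})$ term, all in-probability statements being upgraded to almost-sure ones via Borel--Cantelli along dyadic subsequences.

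The step I expect to be the main obstacle is obtaining these bounds uniformly in $\bm u$, and almost surely, near the boundary of $[0,1]^d$: this is exactly where Condition~\ref{cond:so:pd} loses control of $\ddot C_{ij}$ and where the empirical-process modulus degrades, so that the boundary-layer analysis and the optimisation of its width become delicate, and it is this analysis that fixes the exponents $(3-4\gamma)/6$ and $-\gamma/5$. A secondary but genuine difficulty is the data-adaptivity of $\nu$: because $\bm W_{\bm u}^{\sss \Xc_n}$ depends on $\Xc_n$, the smoothing measure and the empirical processes are not independent, so the increments must be handled through a data-free oscillation modulus valid uniformly over all admissible neighbourhoods of the prescribed spread, rather than by conditioning, with the measurability of $\bm x \mapsto \nu_{\bm u}^{\bm x}(S)$ ensuring that the relevant suprema are well defined.
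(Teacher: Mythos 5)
Your decomposition is, up to bookkeeping, the one the paper uses: bias term $\sqrt{n}\int\{C(\bm w)-C(\bm u)\}\,\dd\nu_{\bm u}^{\sss \Xc_n}(\bm w)$, the Segers remainder~\eqref{eq:asr:Cbn}, and the increment $\int\{\tilde \Cb_n(\bm w)-\tilde \Cb_n(\bm u)\}\,\dd\nu_{\bm u}^{\sss \Xc_n}(\bm w)$; your boundary-layer/truncation strategy and the balances $a\asymp n^{-\gamma/3}$ and $a\asymp n^{-2\gamma/5}$ reproduce the exponents $(3-4\gamma)/6$ and $-\gamma/5$ exactly as in Lemmas~\ref{lem:bias} and~\ref{lem:stoc}. (The paper uses a first-order expansion $C(\bm w)-C(\bm u)=\sum_j(w_j-u_j)\int_0^1\dot C_j\{\bm w(t)\}\,\dd t$ combined with the Lipschitz-type bound~\eqref{eq:so:pd} on $\dot C_j$ rather than a second-order Taylor expansion, which avoids some delicacy about where $\ddot C_{ij}$ exists, but this is cosmetic; it also needs, as you would, a Markov-type bound for the mass of $\nu_{\bm u}^{\sss \Xc_n}$ outside the $\eps_n$-neighbourhood so that the intermediate point stays where the derivative bound is $O(\delta_n^{-1})$.)

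There is, however, one genuine gap, precisely at the place you flag as the main obstacle. In the fluctuation term, after replacing $\Cb_n$ by $\tilde\Cb_n$, the increment $\tilde\Cb_n(\bm w)-\tilde\Cb_n(\bm u)$ contains the cross terms $\{\dot C_j(\bm w)-\dot C_j(\bm u)\}\alpha_{n,j}(u_j)$, and these are covered neither by the Stute oscillation modulus (which handles $\alpha_n(\bm w)-\alpha_n(\bm u)$ and $\alpha_{n,j}(w_j)-\alpha_{n,j}(u_j)$) nor by your Chebyshev/uniform-LIL bound for the far region. Routing them ``into the Condition~\ref{cond:so:pd}-governed estimate'' does not work as stated: by~\eqref{eq:so:pd} the Lipschitz constant of $\dot C_j$ blows up like $\{u_j(1-u_j)\}^{-1}$, and if you pay for $\alpha_{n,j}(u_j)$ only with the uniform LIL bound $O((\log\log n)^{1/2})$, this term is of order $n^{\xi-\gamma/2}$ (up to logs) rather than the $n^{(\xi-\gamma)/2}$ the paper obtains; re-balancing then forces $\xi=\gamma/3$ and an overall rate $n^{-\gamma/6}$, short of the claimed $n^{-\gamma/5}$. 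The missing ingredient is the weighted law of the iterated logarithm of Mason \cite{Mas81} (together with Theorem~2~(iii) of \cite{EinMas88} for the boundary strip $u_j\in[0,\delta_n)\cup(1-\delta_n,1]$): with probability one, eventually $|\alpha_{n,j}(u)|\le\{u(1-u)\}^{1/2}(\log n)^{1/2}\log\log n$ for all $u$, and it is the factor $\{u_j(1-u_j)\}^{1/2}$ cancelling half of the exploding Lipschitz constant that makes this term $O(n^{(\xi-\gamma)/2}\log\log n)$ and lets $\xi=2\gamma/5$ deliver $n^{-\gamma/5}$. A secondary caveat: ``Borel--Cantelli along dyadic subsequences'' is not enough to make the oscillation modulus almost sure; one needs a summable exponential tail bound for $M_n(\eps_n)$ at every $n$ (Proposition~A.1 of \cite{Seg12}), which is what the paper invokes.
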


As expected, the faster the spread of the smoothing distributions decreases, the better the order of the approximation: from~\eqref{eq:asr}, we obtain that the order is $O(n^{-1/6})$ if Condition~\ref{cond:var:W} holds with $\gamma = 1$, while it becomes equal to the one in~\eqref{eq:asr:Cbn} for the usual empirical copula process if Condition~\ref{cond:var:W} holds with $\gamma > 5/4$.

In future work, Theorem~\ref{thm:asr} will be used to establish the asymptotics of certain open-end sequential change-point detection procedures based on $C_n^\nu$ in~\eqref{eq:Cn:nu} for monitoring changes in the copula of multivariate obseravtions.

\section{Proof of Theorem~\ref{thm:asr}}

Let $\tilde \Cb_n^\nu(\bm u) = \int_{[0,1]^d} \tilde \Cb_n(\bm w)\dd \nu_{\bm u}^{\sss \Xc_n}(\bm w)$, $\bm u \in [0,1]^d$, where $\tilde \Cb_n$ is defined in \eqref{eq:tilde:Cbn}. Then, we have the decomposition
\begin{multline}
  \label{eq:decomp}
\sup_{\bm u \in [0,1]^d} | \Cb_n^\nu(\bm u) - \tilde \Cb_n(\bm u) | \leq \sup_{\bm u \in [0,1]^d} \sqrt{n} \left| \int_{[0,1]^d} C(\bm w)\dd \nu_{\bm u}^{\sss \Xc_n}(\bm w) - C(\bm u) \right| \\ + \sup_{\bm u \in [0,1]^d} \left|\int_{[0,1]^d} \{\Cb_n(\bm w) - \tilde \Cb_n(\bm w) \} \dd \nu_{\bm u}^{\sss \Xc_n}(\bm w) \right| + \sup_{\bm u \in [0,1]^d} | \tilde \Cb_n^\nu(\bm u) - \tilde \Cb_n(\bm u) |.
\end{multline}
The second supremum on the right-hand side of~\eqref{eq:decomp} is smaller $\sup_{\bm u \in [0,1]^d} | \Cb_n(\bm u) - \tilde \Cb_n(\bm u)|$ which, according to~\eqref{eq:asr:Cbn}, is $O(n^{-1/4} (\log n)^{1/2} (\log \log n)^{1/4})$ almost surely. The claim then follows from Lemmas~\ref{lem:bias} and~\ref{lem:stoc} below. The former can be regarded as an extension of Proposition 3.5 in \cite{SegSibTsu17}. More generally, many of the arguments used in the proofs of the lemmas are adapted from arguments used in \cite{Seg12,SegSibTsu17}.

\begin{lem}
\label{lem:bias}
Under Conditions~\ref{cond:pd:smooth},~\ref{cond:so:pd} and~\ref{cond:var:W},
\begin{equation}
  \label{eq:bias}
  \sup_{\bm u \in [0, 1]^d} \sqrt{n} \left| \int_{[0,1]^d} C(\bm w) \dd \nu_{\bm u}^{\sss \Xc_n}(\bm w)-C(\bm u) \right| = O(n^{(3 - 4\gamma)/6}), \qquad \text{almost surely}.
\end{equation}
\end{lem}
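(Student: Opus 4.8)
The plan is to recognise the left-hand side of~\eqref{eq:bias} as a pure bias term and to control it by a second-order Taylor expansion of $C$, the first-order part of which is annihilated by the mean-$\bm u$ property of the smoothing law. Writing $\bm W = \bm W_{\bm u}^{\sss\Xc_n}$ for a random vector with (conditional) law $\nu_{\bm u}^{\sss\Xc_n}$ and conditional mean $\bm u$, the inner integral equals $\Ex\{ C(\bm W) \mid \Xc_n\}$, so that
\[
\int_{[0,1]^d} C(\bm w)\,\dd\nu_{\bm u}^{\sss\Xc_n}(\bm w) - C(\bm u) = \Ex\{ C(\bm W) - C(\bm u) \mid \Xc_n \}.
\]
For $\bm u$ in the open cube $(0,1)^d$, the segment $\bm\xi_t = \bm u + t(\bm W - \bm u)$, $t \in [0,1)$, stays in $(0,1)^d$, so under Condition~\ref{cond:so:pd} the map $C$ is twice continuously differentiable along it and the integral form of Taylor's theorem gives
\[
C(\bm W) - C(\bm u) = \sum_{j=1}^d \dot C_j(\bm u)(W_j - u_j) + \sum_{i,j=1}^d (W_i - u_i)(W_j - u_j)\int_0^1 (1-t)\,\ddot C_{ij}(\bm\xi_t)\,\dd t.
\]
Taking the conditional expectation, each summand of the first sum vanishes because $\Ex\{ W_j - u_j \mid \Xc_n\} = 0$, and it remains to bound the conditional expectation of the remainder sum.

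The heart of the argument is a curvature estimate along the segment that exactly matches the variance budget. Using $\xi_{t,j} \ge u_j(1-t)$ and $1 - \xi_{t,j} \ge (1-u_j)(1-t)$, a partial-fraction computation yields, for every value of $\bm W$,
\[
\int_0^1 \frac{1-t}{\xi_{t,j}(1-\xi_{t,j})}\,\dd t = \int_0^1 (1-t)\left(\frac{1}{\xi_{t,j}} + \frac{1}{1-\xi_{t,j}}\right)\dd t \le \frac{1}{u_j} + \frac{1}{1-u_j} = \frac{1}{u_j(1-u_j)}.
\]
For the diagonal terms, Condition~\ref{cond:so:pd} gives $|\ddot C_{jj}(\bm\xi_t)| \le L/\{\xi_{t,j}(1-\xi_{t,j})\}$, so the above bound together with Condition~\ref{cond:var:W} yields
\[
\Ex\left\{ (W_j - u_j)^2 \int_0^1 (1-t)\,|\ddot C_{jj}(\bm\xi_t)|\,\dd t \,\Big|\, \Xc_n \right\} \le L\,\frac{\Var( W_{j,u_j}^{\sss\Xc_n})}{u_j(1-u_j)} \le \frac{L\kappa}{n^\gamma}.
\]
For the cross terms $i \ne j$, the bound $|\ddot C_{ij}(\bm\xi_t)| \le L\min(\,\cdot\,,\,\cdot\,) \le L\{\xi_{t,i}(1-\xi_{t,i})\,\xi_{t,j}(1-\xi_{t,j})\}^{-1/2}$, followed by the Cauchy--Schwarz inequality applied first to the $t$-integral (with weight $1-t$) and then to the conditional expectation, gives, again via Condition~\ref{cond:var:W},
\[
\Ex\left\{ |W_i - u_i||W_j - u_j| \int_0^1 (1-t)\,|\ddot C_{ij}(\bm\xi_t)|\,\dd t \,\Big|\, \Xc_n \right\} \le L\,\frac{\sqrt{\Var( W_{i,u_i}^{\sss\Xc_n})\Var( W_{j,u_j}^{\sss\Xc_n})}}{\sqrt{u_i(1-u_i)\,u_j(1-u_j)}} \le \frac{L\kappa}{n^\gamma}.
\]
Summing the $d^2$ contributions shows that the conditional bias is $O(n^{-\gamma})$ uniformly over $\bm u \in (0,1)^d$. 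Multiplying by $\sqrt n$ gives a uniform bound of order $n^{1/2-\gamma}$, which in particular implies the claimed $O(n^{(3-4\gamma)/6})$ since $1/2 - \gamma \le (3-4\gamma)/6$.

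Two technical points remain. When $\bm u$ lies on the boundary (some $u_j \in \{0,1\}$), the component $W_{j,u_j}^{\sss\Xc_n}$ is a nonnegative variable of mean $u_j \in \{0,1\}$ and is therefore degenerate, so the problem reduces to the same estimate for the lower-dimensional copula obtained by freezing that coordinate; a continuity argument then extends the uniform bound from $(0,1)^d$ to all of $[0,1]^d$. Likewise, validity of the integral Taylor formula up to the possibly-boundary endpoint $\bm W$ follows because the remainder converges as an improper integral, which is guaranteed by the very estimate above. I expect the main obstacle to be precisely the explosive behaviour of the second-order partial derivatives near the faces of the cube: a crude bound using $\sup|\ddot C_{ij}|$ is unavailable, and $\ddot C_{ij}(\bm\xi_t)$ itself can blow up when $\bm W$ approaches the boundary while $\bm u$ does not. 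The resolution is that integrating the curvature bound of Condition~\ref{cond:so:pd} along the whole segment tames it to exactly $1/\{u_j(1-u_j)\}$, a quantity evaluated at $\bm u$ rather than at the random, possibly near-boundary point, which is then cancelled by the factor $u_j(1-u_j)$ in the variance bound of Condition~\ref{cond:var:W}; securing this cancellation uniformly, including for $\bm u$ arbitrarily close to the boundary, is the delicate part.
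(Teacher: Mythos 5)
Your proof is correct, but it takes a genuinely different --- and in fact sharper --- route than the paper's. The paper stops at a \emph{first-order} expansion, $C(\bm w)-C(\bm u)=\sum_j(w_j-u_j)\int_0^1\dot C_j\{\bm w(t)\}\,\dd t$, kills the leading term by centering, and then controls the increment $\dot C_j\{\bm w(t)\}-\dot C_j(\bm u)$ via the Lipschitz-type bound~\eqref{eq:so:pd}, whose constant blows up like $\delta_n^{-1}$ on the strip $u_j\in[\delta_n,1-\delta_n]$; a separate crude Cauchy--Schwarz bound handles the complementary boundary strip, and optimizing the width $\delta_n=n^{-\gamma/3}$ yields $O(n^{(3-4\gamma)/6})$. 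You instead push the Taylor expansion to second order and exploit the fact that integrating the curvature bound of Condition~\ref{cond:so:pd} along the segment, via $\xi_{t,j}\geq(1-t)u_j$ and $1-\xi_{t,j}\geq(1-t)(1-u_j)$, gives $\int_0^1(1-t)\{\xi_{t,j}(1-\xi_{t,j})\}^{-1}\dd t\leq\{u_j(1-u_j)\}^{-1}$, a quantity that is exactly cancelled by the factor $u_j(1-u_j)$ in Condition~\ref{cond:var:W}; no boundary strip or localization is needed, and the resulting uniform bound $O(n^{1/2-\gamma})$ is strictly better than the paper's $O(n^{(3-4\gamma)/6})$ (e.g.\ $n^{-1/2}$ versus $n^{-1/6}$ when $\gamma=1$), so the lemma follows a fortiori. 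I checked the key steps --- the vanishing of the first-order term, the passage to the improper integral at $t=1$, the $\min\leq$ geometric-mean bound for the cross terms, and the two applications of Cauchy--Schwarz --- and they all hold. One small repair: the ``continuity argument'' you invoke to pass from $(0,1)^d$ to $[0,1]^d$ is neither available (the map $\bm u\mapsto\nu_{\bm u}^{\sss\Xc_n}$ need not be continuous in $\bm u$) nor needed, since your observation that $W_{j,u_j}^{\sss\Xc_n}$ is degenerate when $u_j\in\{0,1\}$ already disposes of every boundary face by induction on $d$ (base case $C(u)=u$, zero bias). Were your bound incorporated into Theorem~\ref{thm:asr}, the bias term would cease to be the bottleneck for $\gamma$ close to $1$.
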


\begin{proof}[\bf Proof]
To prove the claim, it suffices to show that it holds conditionally on $\bm X_1,\bm X_2,\dots$ for almost any sequence $\bm X_1,\bm X_2,\dots$. We thus reason conditionally on $\bm X_1,\bm X_2,\dots$ in the rest of this proof.  

Let $\bm u, \bm w \in [0,1]^d$, and define $\bm w(t) = \bm u + t (\bm w - \bm u)$, $t \in [0,1]$ and $G(t) = C\{\bm w(t) \}$, $t \in [0,1]$. The function $G$ is continuous on $[0,1]$ and is continuously differentiable on $(0,1)$ by Condition~\ref{cond:pd:smooth} with derivative $G'(t) = \sum_{j=1}^d (w_j - u_j) \dot C_j \{ \bm w(t)\}$, $t \in (0,1)$. By the fundamental theorem of calculus, $G(1) - G(0) = \int_0^1 G'(t) \dd t$, that is,
$$
C(\bm w) - C(\bm u) = \sum_{j=1}^d (w_j - u_j)  \int_0^1 \dot C_j \{ \bm w(t)\} \dd t.
$$
Some thought reveals that, under Condition~\ref{cond:pd:smooth} and with the adopted conventions, the previous equality holds no matter how $\bm u$ and $\bm w$ are chosen in $[0,1]^d$. Using Fubini's theorem, the left-hand side of~\eqref{eq:bias} is then equal to 
$$
\sup_{\bm u \in [0, 1]^d} \sqrt{n} \left| \int_{[0,1]^d} \{ C(\bm w) - C(\bm u) \} \dd \nu_{\bm u}^{\sss \Xc_n}(\bm w) \right| = \sup_{\bm u \in [0, 1]^d} \sqrt{n} \left|\sum_{j=1}^d\int_0^1\left\{ \int_{[0,1]^d}(w_j-u_j)\dot{C}_j \{ \bm w(t)\} \dd \nu_{\bm u}^{\sss \Xc_n}(\bm w)\right\} \dd t \right|.
$$
For any $j \in \{1,\dots,d\}$, let
\begin{equation}
  \label{eq:Ijn}
  I_{j,n} = \sqrt{n} \int_0^1\sup_{\bm u \in [0, 1]^d}\left|\int_{[0,1]^d}(w_j-u_j)\dot{C}_j \{ \bm w(t) \} \dd \nu_{\bm u}^{\sss \Xc_n}(\bm w)\right|\dd t.
\end{equation}
By the triangle inequality, the left-hand side of~\eqref{eq:bias} is then smaller than $\sum_{j=1}^d I_{j,n}$. Fix $j \in \{1,\dots,d\}$. To prove~\eqref{eq:bias}, we shall now show that $I_{j,n} = O(n^{(3 - 4\gamma)/6})$. Let $\delta_n = n^{-\xi}$ for some $\xi \in (0,\gamma - 1/2)$ to be determined later. For $n$ sufficiently large such that $\delta_n \leq 1/2$, we have that $I_{j,n} \leq J_{j,n} + K_{j,n}$, where
\begin{align}
  \nonumber
  J_{j,n} &= \sqrt{n} \int_0^1\sup_{\substack{\bm u \in [0, 1]^d \\ u_j \in [0,\delta_n)\cup(1-\delta_n,1]}} \left| \int_{[0,1]^d}(w_j-u_j) \dot{C}_j\{\bm w(t)\} \dd \nu_{\bm u}^{\sss \Xc_n}(\bm w) \right| \dd t, \\
  \label{eq:K:j:n:delta}
  K_{j,n} &= \sqrt{n} \int_0^1\sup_{\substack{\bm u \in [0, 1]^d \\ u_j \in [\delta_n,1-\delta_n]}} \left| \int_{[0,1]^d}(w_j-u_j) \dot{C}_j\{\bm w(t)\} \dd \nu_{\bm u}^{\sss \Xc_n}(\bm w) \right| \dd t.
\end{align}

\emph{Term $J_{j,n}$:} Since $0 \leq \dot C_j \leq 1$ \cite[see, e.g.,][Section 2.2]{Nel06}, and from H\"older's inequality and Condition~\ref{cond:var:W}, 
\begin{align*}
  J_{j,n} \leq& \sqrt{n} \sup_{\substack{\bm u \in [0, 1]^d \\ u_j \in [0,\delta_n)\cup(1-\delta_n,1]}}\int_{[0,1]^d}|w_j-u_j|\dd \nu_{\bm u}^{\sss \Xc_n}(\bm w) \leq  \sqrt{n} \sup_{\substack{\bm u \in [0, 1]^d \\ u_j \in [0,\delta_n)\cup(1-\delta_n,1]}} \sqrt{ \int_{[0,1]^d} \{ w_j-\Ex(W_{j,u_j}^{\sss \Xc_n} \mid \bm X_1, \bm X_2, \dots) \}^2\dd \nu_{\bm u}^{\sss \Xc_n}(\bm w)} \\
  =&  \sqrt{n} \sup_{\substack{\bm u \in [0, 1]^d \\ u_j \in [0,\delta_n)\cup(1-\delta_n,1]}} \sqrt{\Var(W_{j,u_j}^{\sss \Xc_n} \mid \bm X_1, \bm X_2, \dots)} \leq n^{(1-\gamma)/2} \sup_{u \in [0,\delta_n)\cup(1-\delta_n,1]}\sqrt{\kappa u(1-u)} = O( n^{(1-\gamma)/2} \delta_n^{1/2}) = O(n^{(1 - \gamma -\xi)/2}),
\end{align*}
since $0 \leq u \leq \delta_n$ implies that $\sqrt{u(1-u)} \leq \sqrt{u} \leq \delta_n^{1/2}$ and $1 - \delta_n \leq u \leq 1$ implies that $\sqrt{u(1-u)} \leq \sqrt{1-u} \leq \delta_n^{1/2}$.

\emph{Term $K_{j,n}$:} Since $\int_{[0,1]^d}(w_j - u_j)\dd \nu_{\bm u}^{\sss \Xc_n}(\bm w) = 0$, $K_{j,n}$ in~\eqref{eq:K:j:n:delta} can be rewritten as
$$
 K_{j,n} = \sqrt{n} \int_0^1\sup_{\substack{\bm u \in [0, 1]^d \\ u_j \in [\delta_n,1-\delta_n]}} \left| \int_{[0,1]^d}(w_j-u_j) \left[ \dot{C}_j\{\bm w(t)\} -\dot C_j(\bm u) \right] \dd \nu_{\bm u}^{\sss \Xc_n}(\bm w) \right| \dd t.
$$
Let $\eps_n = \delta_n/2$ for all $n \in \N$. Then, $K_{j,n} \leq K_{j,n}' + K_{j,n}''$, where
\begin{align*}
  K_{j,n}' =& \sqrt{n} \int_0^1\left[\sup_{\substack{\bm u \in [0, 1]^d \\ u_j \in [\delta_n,1-\delta_n]}} \int_{\{\bm w \in [0,1]^d : |\bm w-\bm u|_{\infty}\leq \eps_n \}} |w_j-u_j| \left| \dot{C}_j \{ \bm w(t) \} - \dot{C}_j(\bm u) \right| \dd \nu_{\bm u}^{\sss \Xc_n}(\bm w)\right]\dd t, \\
  K_{j,n}'' =& \sqrt{n} \int_0^1 \left[\sup_{\substack{\bm u \in [0, 1]^d \\ u_j \in [\delta_n,1-\delta_n]}} \int_{[0,1]^d} \1\{|\bm w-\bm u|_\infty>\eps_n\} |w_j-u_j| \left| \dot{C}_j \{ \bm w(t) \} - \dot{C}_j(\bm u) \right| \dd \nu_{\bm u}^{\sss \Xc_n}(\bm w) \right]\dd t.
\end{align*}
From Condition~\ref{cond:so:pd} and Lemma 4.3 in \cite{Seg12}, for all $\bm u, \bm v \in [0,1]^d$ such that $u_j,v_j \in (0,1)$,
\begin{equation}
  \label{eq:so:pd}
|\dot C_j(\bm u) - \dot C_j(\bm v)| \leq L \max \left\{ \frac{1}{u_j(1-u_j)}, \frac{1}{v_j(1-v_j)} \right\} \sum_{k=1}^d |u_k - v_k|,
\end{equation}
which implies that, for any $\bm u, \bm w \in [0,1]^d$ such that $u_j \in [\delta_n,1-\delta_n]$ and $|\bm w-\bm u|_{\infty}\leq \eps_n$, and for any $t \in[0,1]$,
\begin{align*}
  |\dot C_j\{\bm w(t)\} - \dot C_j(\bm u)| &\leq t L \max \left[ \frac{1}{u_j(1-u_j)}, \frac{1}{\{u_j + t(w_j- u_j)\}\{1 - u_j - t(w_j- u_j)\}} \right] \sum_{k=1}^d |w_k - u_k| \leq L L' \delta_n^{-1} \sum_{k=1}^d |w_k - u_k|,
\end{align*}
where $L' > 0$ is another constant. Indeed, $u_j \in [\delta_n, 1- \delta_n]$ which implies that $u_j(1-u_j) \geq \delta_n(1-\delta_n)$ and thus that $\{u_j(1-u_j) \}^{-1}\leq \{\delta_n(1-\delta_n)\}^{-1} = O(\delta_n^{-1})$. Similarly, $u_j + t(w_j- u_j) \in [\delta_n - \eps_n, 1- \delta_n+\eps_n] = [\delta_n/2, 1-\delta_n/2]$ which implies that $\{u_j + t(w_j- u_j)\}\{1 - u_j - t(w_j- u_j)\}^{-1} = O(\delta_n^{-1})$. Hence,
\begin{align*}
  K_{j,n}' \leq& LL' \sqrt{n} \delta_n^{-1} \sup_{\bm u \in [0, 1]^d} \int_{[0,1]^d} |w_j-u_j| \sum_{k=1}^d |w_k - u_k|\dd \nu_{\bm u}^{\sss \Xc_n}(\bm w) \leq LL' \sqrt{n} \delta_n^{-1}  \sum_{k=1}^d \sup_{\bm u \in [0, 1]^d} \int_{[0,1]^d} |w_j-u_j| |w_k - u_k|\dd \nu_{\bm u}^{\sss \Xc_n}(\bm w).
\end{align*}
From H\"older's inequality and Condition~\ref{cond:var:W}, for any $\bm u \in [0, 1]^d$ and $k \in \{1,\dots,d\}$,
\begin{align*}
  \int_{[0,1]^d} |w_j-u_j| |w_k - u_k|\dd \nu_{\bm u}^{\sss \Xc_n}(\bm w) &= \Ex(|W_{j,u_j}^{\sss \Xc_n} - u_j||W_{k,u_k}^{\sss \Xc_n} - u_k| \mid \bm X_1, \bm X_2, \dots) \\
                                                                          &\leq \sqrt{\Var(W_{j,u_j}^{\sss \Xc_n} \mid \bm X_1, \bm X_2, \dots)} \sqrt{\Var(W_{k,u_k}^{\sss \Xc_n} \mid \bm X_1, \bm X_2, \dots)} \leq \kappa n^{-\gamma}.
\end{align*}
It thus follows that $K_{j,n}' = O(n^{1/2} \delta_n^{-1} n^{-\gamma}) = O(n^{1/2 + \xi - \gamma}) $. As far as $K_{j,n}''$ is concerned, using the fact that $0 \leq \dot C_j \leq 1$ and that
$$
\1\{|\bm w-\bm u|_\infty>\eps_n\} \leq \frac{1}{\eps_n} |\bm w-\bm u|_\infty \leq \frac{1}{\eps_n} \sum_{k=1}^d |w_k - u_k|,
$$
we obtain that
\begin{align}
  \nonumber
  K_{j,n}'' &\leq \sqrt{n} \eps_n^{-1} \sum_{k=1}^d \sup_{\bm u \in [0, 1]^d} \int_{[0,1]^d}|w_j-u_j| |w_k - u_k| \dd \nu_{\bm u}^{\sss \Xc_n}(\bm w) = O(n^{1/2} \eps_n^{-1} n^{-\gamma}) = O(n^{1/2} \delta_n^{-1} n^{-\gamma}) = O(n^{1/2 + \xi - \gamma})
\end{align}
since $\eps_n = \delta_n/2$. Thus, $I_{j,n}$ in~\eqref{eq:Ijn} is $O(n^{(1 - \gamma -\xi)/2}) + O(n^{1/2 + \xi - \gamma})$. Some thought reveals that the best rate is obtained by taking $\xi = \gamma/3$ which gives $I_{j,n} = O(n^{(3 - 4\gamma)/6})$. The latter holds conditionally on $\bm X_1, \bm X_2, \dots$ for almost any sequence $\bm X_1, \bm X_2, \dots$, which completes the proof of~\eqref{eq:bias}.
\end{proof}





\begin{lem}
\label{lem:stoc}
Under Conditions~\ref{cond:pd:smooth},~\ref{cond:so:pd} and~\ref{cond:var:W},
\begin{equation}
  \label{eq:stoc}
  \sup_{\bm u \in [0,1]^d} | \tilde \Cb_n^\nu(\bm u) - \tilde \Cb_n(\bm u) | = O(n^{-\gamma/5} (\log n)^{1/2} (\log \log n)^{1/2}) \qquad \text{almost surely}. 
\end{equation}
\end{lem}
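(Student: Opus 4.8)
The plan is to follow the blueprint of the proof of Lemma~\ref{lem:bias}: it suffices to establish~\eqref{eq:stoc} conditionally on $\bm X_1, \bm X_2, \dots$ for almost every sequence, and I would reason conditionally throughout. Writing
$$
\tilde \Cb_n^\nu(\bm u) - \tilde \Cb_n(\bm u) = \int_{[0,1]^d} \{ \tilde \Cb_n(\bm w) - \tilde \Cb_n(\bm u) \} \dd \nu_{\bm u}^{\sss \Xc_n}(\bm w),
$$
and recalling from~\eqref{eq:tilde:Cbn} that $\tilde \Cb_n(\bm u) = \alpha_n(\bm u) - \sum_{j=1}^d \dot C_j(\bm u) \alpha_{n,j}(u_j)$, I would split the integrand into the multivariate increment $\alpha_n(\bm w) - \alpha_n(\bm u)$ and the $d$ marginal correction increments $\dot C_j(\bm w) \alpha_{n,j}(w_j) - \dot C_j(\bm u) \alpha_{n,j}(u_j)$, and bound the contribution of each. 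Two almost sure facts would be used repeatedly: by the law of the iterated logarithm for empirical processes, $\sup_{\bm u} |\alpha_n(\bm u)|$ and $\sup_{u_j} |\alpha_{n,j}(u_j)|$ are $O((\log \log n)^{1/2})$, and, since $0 \leq \dot C_j \leq 1$, so is $\sup_{\bm u} |\tilde \Cb_n(\bm u)|$.

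The core device is a near/far split of the integration region at a scale $\eps_n = n^{-\beta}$, with $\beta > 0$ to be optimised at the end. On the far region $\{ |\bm w - \bm u|_\infty > \eps_n \}$, Markov's inequality and Condition~\ref{cond:var:W} give, uniformly in $\bm u$, $\nu_{\bm u}^{\sss \Xc_n}(|\bm w - \bm u|_\infty > \eps_n) \leq \eps_n^{-2} \sum_{j=1}^d \Var(W_{j,u_j}^{\sss \Xc_n} \mid \bm X_1, \bm X_2, \dots) = O(n^{-\gamma} \eps_n^{-2})$, so that, bounding the integrand there by $2 \sup_{\bm u} |\tilde \Cb_n(\bm u)| = O((\log \log n)^{1/2})$, the far contribution is $O(n^{-\gamma + 2\beta} (\log \log n)^{1/2})$. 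On the near region $\{ |\bm w - \bm u|_\infty \leq \eps_n \}$, I would control $|\alpha_n(\bm w) - \alpha_n(\bm u)|$ and each $|\alpha_{n,j}(w_j) - \alpha_{n,j}(u_j)|$ by the almost sure oscillation modulus of the empirical process (of Stute type, adapted as in \cite{Seg12,SegSibTsu17}), namely $\sup_{|\bm w - \bm u|_\infty \leq \eps_n} |\alpha_n(\bm w) - \alpha_n(\bm u)| = O((\eps_n \log(1/\eps_n))^{1/2}) = O(n^{-\beta/2} (\log n)^{1/2})$, and likewise for the margins.

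The marginal correction increments require handling the variation of $\dot C_j$, which I would separate as $\dot C_j(\bm w) \alpha_{n,j}(w_j) - \dot C_j(\bm u) \alpha_{n,j}(u_j) = \{ \dot C_j(\bm w) - \dot C_j(\bm u) \} \alpha_{n,j}(w_j) + \dot C_j(\bm u) \{ \alpha_{n,j}(w_j) - \alpha_{n,j}(u_j) \}$. The second summand is dispatched by $|\dot C_j| \leq 1$ together with the oscillation modulus above. For the first summand, exactly as in the proof of Lemma~\ref{lem:bias}, I would introduce an auxiliary boundary cut at a scale $\delta_n$ (with $\eps_n \leq \delta_n/2$): when $u_j \in [\delta_n, 1 - \delta_n]$, estimate~\eqref{eq:so:pd} bounds $|\dot C_j(\bm w) - \dot C_j(\bm u)|$ by $O(\delta_n^{-1} \sum_{k=1}^d |w_k - u_k|)$ on the near region, and integrating $\sum_k |w_k - u_k|$ against $\nu_{\bm u}^{\sss \Xc_n}$ costs only $O(n^{-\gamma/2})$ by Hölder's inequality and Condition~\ref{cond:var:W}; when $u_j$ lies in a boundary strip $[0, \delta_n) \cup (1 - \delta_n, 1]$, where~\eqref{eq:so:pd} is unavailable, I would fall back on $0 \leq \dot C_j \leq 1$ and on the near-boundary decay $\sup_{0 \leq s \leq \delta_n} |\alpha_{n,j}(s)| = O((\delta_n \log \log n)^{1/2})$ of the marginal empirical process.

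Finally I would optimise the exponents: the near contribution $n^{-\beta/2} (\log n)^{1/2}$ and the far contribution $n^{-\gamma + 2\beta} (\log \log n)^{1/2}$ are balanced by the choice $\beta = 2\gamma/5$, both becoming $O(n^{-\gamma/5})$ up to a single factor $(\log n)^{1/2}$ or $(\log \log n)^{1/2}$, each of which is dominated by the product $(\log n)^{1/2} (\log \log n)^{1/2}$; this yields~\eqref{eq:stoc}. I expect the genuine difficulty to lie entirely in the marginal correction term near the boundary: there the Lipschitz-type estimate~\eqref{eq:so:pd} degenerates as $u_j \to \{0,1\}$, and reconciling this degeneracy with the spread of the smoothing distributions — choosing $\delta_n$ so that neither the interior term $O(\delta_n^{-1} n^{-\gamma/2})$ nor the boundary term $O((\delta_n \log \log n)^{1/2})$ exceeds the target order, possibly by exploiting the mean-zero property $\int (w_j - u_j) \dd \nu_{\bm u}^{\sss \Xc_n}(\bm w) = 0$ as in Lemma~\ref{lem:bias} to gain an extra order — is the delicate step, and is what I anticipate ultimately pins down the exponent $\gamma/5$ rather than a naively better one.
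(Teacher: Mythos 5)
Your overall architecture matches the paper's: the near/far split at scale $\eps_n = n^{-\beta}$, Chebyshev's inequality plus Condition~\ref{cond:var:W} for the far region, the almost sure oscillation modulus for the near increments of $\alpha_n$ and $\alpha_{n,j}$, and a boundary cut at a second scale $\delta_n$ for the variation of $\dot C_j$. The far and oscillation bounds, and their balancing at $\beta = 2\gamma/5$, are exactly as in the paper. The gap is in the interior part of the marginal correction term, and you have in fact located it yourself without closing it. With only the unweighted bound $\sup_{u}|\alpha_{n,j}(u)| = O((\log \log n)^{1/2})$ together with $|\dot C_j(\bm w) - \dot C_j(\bm u)| = O(\delta_n^{-1}\sum_k |w_k - u_k|)$ from~\eqref{eq:so:pd}, the interior contribution is $O(\delta_n^{-1} n^{-\gamma/2}(\log\log n)^{1/2})$ while the boundary strip contributes $O(\delta_n^{1/2}(\log\log n)^{1/2})$; balancing these forces $\delta_n \asymp n^{-\gamma/3}$ and yields only $O(n^{-\gamma/6})$, strictly worse than the claimed $n^{-\gamma/5}$ (you would need simultaneously $\delta_n \lesssim n^{-2\gamma/5}$ for the boundary and $\delta_n \gtrsim n^{-3\gamma/10}$ for the interior, which is impossible). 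The mean-zero property of $\nu_{\bm u}^{\sss \Xc_n}$ does not rescue this: the integrand $\{\dot C_j(\bm w)-\dot C_j(\bm u)\}\alpha_{n,j}(\cdot)$ is not linear in $\bm w - \bm u$, so no centering cancellation of the kind used in Lemma~\ref{lem:bias} is available.

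The missing ingredient is Mason's weighted law of the iterated logarithm: with probability one, for all $n$ large enough, $|\alpha_{n,j}(u)| \leq \{u(1-u)\}^{1/2} b_n$ uniformly in $u$, with $b_n = (\log n)^{1/2}\log\log n$. Keeping $\alpha_{n,j}$ evaluated at $u_j$ (as in~\eqref{eq:Rnj}) and combining with~\eqref{eq:so:pd}, the factor $\{u_j(1-u_j)\}^{-1}$ coming from the second-order derivative bound is then only half paid: $|\alpha_{n,j}(u_j)|/\{u_j(1-u_j)\} \leq \{u_j(1-u_j)\}^{-1/2} b_n \leq \delta_n^{-1/2} b_n$ on $[\delta_n, 1-\delta_n]$. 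The interior term becomes $O(\delta_n^{-1/2} b_n n^{-\gamma/2})$, which with the paper's choice $\delta_n = \eps_n \log n$ equals $O(n^{(\xi-\gamma)/2}\log\log n)$ and is dominated by the $n^{-\xi/2}$ terms since $\xi < \gamma/2$. (One also needs the small observation that $|\bm w - \bm u|_\infty \leq \eps_n = \delta_n/\log n$ keeps $w_j(1-w_j)$ comparable to $u_j(1-u_j)$, so that the maximum in~\eqref{eq:so:pd} is controlled by $\{u_j(1-u_j)\}^{-1}$ up to a constant.) With this sharpening the marginal correction term never enters the final optimisation, and the rate $n^{-\gamma/5}$ follows from balancing only the far and oscillation contributions, as you did.
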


\begin{proof}
Let $\eps_n = n^{-\xi}$ for some $\xi \in (0,\gamma/2)$ to be specified later. The left-hand side of~\eqref{eq:stoc} can then be decomposed, for $n$ sufficiently large, as 
\begin{equation*}
  \sup_{\bm u \in [0,1]^d} | \tilde \Cb_n^\nu(\bm u) - \tilde \Cb_n(\bm u) | = \sup_{\bm u \in [0,1]^d} \left| \int_{[0,1]^d} \{\tilde \Cb_n(\bm w) - \tilde \Cb_n(\bm u) \} \dd \nu_{\bm u}^{\sss \Xc_n}(\bm w) \right| \leq Q_n + R_n,
\end{equation*}
where 
\begin{align}
  \nonumber
  Q_n &= \sup_{\bm u \in [0,1]^d}  \left| \int_{\substack{\bm w \in [0,1]^d \\ | \bm w - \bm u |_\infty \geq \eps_n}} \{\tilde \Cb_n(\bm w) - \tilde \Cb_n(\bm u) \} \dd \nu_{\bm u}^{\sss \Xc_n}(\bm w) \right|, \\
  \label{eq:Rn}
  R_n &= \sup_{\bm u \in [0,1]^d} \left| \int_{\substack{\bm w \in [0,1]^d \\ | \bm w - \bm u |_\infty \leq \eps_n}} \{\tilde \Cb_n(\bm w) - \tilde \Cb_n(\bm u) \} \dd \nu_{\bm u}^{\sss \Xc_n}(\bm w) \right|.
\end{align}
Using the expression of $\tilde \Cb_n$ in~\eqref{eq:tilde:Cbn} and the fact that $0 \leq \dot C_j \leq 1$ for all $j \in \{1,\dots,d\}$, we have that
\begin{align*}
  Q_n \leq 2 (d+1) \sup_{\bm u \in [0,1]^d} |\alpha_n(\bm u)| \times \sup_{\bm u \in [0,1]^d} \nu_{\bm u}^{\sss \Xc_n}(\{ \bm w \in [0,1]^d : | \bm w - \bm u |_\infty \geq \eps_n \}).
\end{align*}
From the law of the iterated logarithm for empirical processes \citep[see, e.g.,][Chap. 2]{Kos08}, we have that $\sup_{\bm u \in [0,1]^d} |\alpha_n(\bm u)| = O((\log \log n)^{1/2})$ almost surely. Furthermore, for almost any sequence $\bm X_1,\bm X_2,\dots$, conditionally on $\bm X_1,\bm X_2,\dots$, using Chebyshev's inequality and Condition~\ref{cond:var:W}, we obtain that
\begin{align*}
  \nu_{\bm u}^{\sss \Xc_n}(\{ \bm w \in [0,1]^d : | \bm w - \bm u |_\infty \geq \eps_n \}) &= \Pr( | \bm W_{\bm u}^{\sss \Xc_n} - \bm u |_\infty \geq \eps_n \mid \bm X_1,\bm X_2,\dots) = \Pr \left[ \bigcup_{j=1}^d \left\{ | W_{j,u_j}^{\sss \Xc_n} - u_j |_\infty \geq \eps_n \right\} \mid \bm X_1,\bm X_2,\dots \right] \\
  &\leq \sum_{j=1}^d \Pr \left( | W_{j,u_j}^{\sss \Xc_n} - u_j |_\infty \geq \eps_n \mid \bm X_1,\bm X_2,\dots \right) \leq \sum_{j=1}^d \frac{\Var(W_{j,u_j}^{\sss \Xc_n} \mid \bm X_1,\bm X_2,\dots)}{\eps_n^2} \leq \frac{\kappa d}{n^\gamma \eps_n^2}.
\end{align*}
It follows that $Q_n = O(n^{-\gamma} \eps_n^{-2} (\log \log n)^{1/2}) = O(n^{-\gamma + 2\xi} (\log \log n)^{1/2})$ almost surely.

We now deal with the term $R_n$ in~\eqref{eq:Rn}. Recall for instance from~\cite{Stu84} that the oscillation modulus of the multivariate empirical process $\alpha_n$ is defined by
$$
M_n(\eps) = \sup_{\substack{\bm v, \bm w \in [0,1]^d \\ | \bm u - \bm w |_\infty \leq \eps}} | \alpha_n(\bm u) - \alpha_n(\bm w)|,  \qquad \eps \in [0,1].
$$
Furthermore, for any $j \in \{1,\dots,d\}$, let
\begin{equation}
  \label{eq:Rnj}
R_{n,j} = \sup_{\bm u \in [0,1]^d} \int_{\substack{\bm w \in [0,1]^d \\ | \bm w - \bm u |_\infty \leq \eps_n}} |\dot C_j(\bm u) - \dot C_j(\bm w)| |\alpha_{n,j}(u_j)| \dd \nu_{\bm u}^{\sss \Xc_n}(\bm w).
\end{equation}
Then, using the fact that $0 \leq \dot C_j \leq 1$ for all $j \in \{1,\dots,d\}$, it can be verified from~\eqref{eq:Rn} that
\begin{equation}
  \label{eq:Rn:ineq}
R_n \leq (d+1) M_n(\eps_n) + \sum_{j=1}^d  R_{n,j}.
\end{equation}
For the first summand on the right-end side of~\eqref{eq:Rn:ineq}, we proceed as in the proof of Proposition 4.2 of \cite{Seg12} for the term $I_n$. Let $\lambda_n = 2 K_2^{-1/2} (\log n)^{1/2} \eps_n^{1/2}$, where $K_2$ is the constant in Proposition A.1 of \cite{Seg12}. Since $\lambda_n / (n^{1/2} \eps_n) = O(n^{(\xi-1)/2}(\log n)^{1/2}) \to 0$ as $\xi < \gamma/2 < 1$ and the function $\psi$ in (A.2) of \cite{Seg12} is decreasing with $\psi(0) = 1$, we have that $\psi \{ \lambda_n / (n^{1/2} \eps_n)\} \geq 1/2$ for all $n$ greater than some $n_0$. Hence, 
$$
\sum_{n=n_0}^\infty \frac{1}{\eps_n} \exp\left\{- \frac{K_2 \lambda_n^2}{\eps_n} \psi \left( \frac{\lambda_n}{\sqrt{n}\eps_n} \right) \right\} \leq \sum_{n=n_0}^\infty \frac{1}{\eps_n} \exp\left( - \frac{K_2 \lambda_n^2}{2\eps_n}  \right) = \sum_{n=n_0}^\infty \frac{1}{\eps_n} \exp\left( - 2\log n  \right) =  \sum_{n=n_0}^\infty \frac{1}{n^2 \eps_n} =  \sum_{n=n_0}^\infty \frac{1}{n^{2-\xi}} < \infty. 
$$
Therefore, by the Borel--Cantelli lemma, $M_n(\eps_n) = O(\lambda_n) = O(\eps_n^{1/2} (\log n)^{1/2}) = O(n^{-\xi/2} (\log n)^{1/2})$ almost surely.

Fix $j \in \{1,\dots,d\}$ and let us deal with $R_{n,j}$ in~\eqref{eq:Rnj}. Let $\delta_n = \eps_n \log n = n^{-\xi} \log n$. It can be verified that
\begin{equation}
  \label{eq:Rnj:ineq}
R_{n,j} \leq \sup_{u_j \in [0,\delta_n)\cup(1-\delta_n,1]} |\alpha_{n,j}(u_j)| +  \sup_{\substack{\bm u \in [0,1]^d \\ u_j \in [\delta_n, 1-\delta_n]}} \int_{\substack{\bm w \in [0,1]^d \\ | \bm w - \bm u |_\infty \leq \eps_n}} |\dot C_j(\bm u) - \dot C_j(\bm w)| |\alpha_{n,j}(u_j)| \dd \nu_{\bm u}^{\sss \Xc_n}(\bm w).
\end{equation}
Proceeding as in the proof of Proposition~4.2 of \cite{Seg12}, from Theorem 2~(iii) in \cite{EinMas88}, the first supremum on the right-hand side of the previous display is $O(\delta_n^{1/2} (\log \log n)^{1/2}) = O(n^{-\xi/2} (\log n)^{1/2} (\log \log n)^{1/2})$ almost surely. As far as the second supremum is concerned, note that, for any $\bm u, \bm w \in [0,1]^d$ such that $u_j \in [\delta_n,1-\delta_n]$ and $|\bm w-\bm u|_{\infty}\leq \eps_n$, we have
$$
w_j = u_j \left(1 + \frac{w_j - u_j}{u_j} \right) \geq  u_j \left(1 - \frac{|w_j - u_j|}{u_j} \right) \geq  u_j \left(1 - \frac{\eps_n}{\delta_n} \right) = u_j \left(\frac{\log n - 1}{\log n} \right) , 
$$
and similarly that
$$
1 - w_j = (1 - u_j) \left(1 + \frac{u_j - w_j}{1 - u_j} \right) \geq  (1 - u_j) \left(1 - \frac{|w_j - u_j|}{1 - u_j} \right) \geq  (1 - u_j) \left(1 - \frac{\eps_n}{\delta_n} \right) = (1 - u_j) \left(\frac{\log n - 1}{\log n} \right)  
$$
so that $w_j(1-w_j) \geq u_j(1-u_j) (\log n - 1)^2 / (\log n)^2$ and thus
$$
\max \left\{ \frac{1}{u_j(1-u_j)}, \frac{1}{w_j(1-w_j)} \right\} \leq \left(\frac{\log n}{\log n - 1}\right)^{2} \frac{1}{u_j(1-u_j)}.
$$
Combined with~\eqref{eq:so:pd} which holds from Condition~\ref{cond:so:pd} and Lemma 4.3 in \cite{Seg12}, it follows that, for sufficiently large $n$, the second supremum on the right-hand side of~\eqref{eq:Rnj:ineq} is smaller than
$$
2L \sup_{\substack{\bm u \in [0,1]^d \\ u_j \in [\delta_n, 1-\delta_n]}} \int_{\substack{\bm w \in [0,1]^d \\ | \bm w - \bm u |_\infty \leq \eps_n}} \frac{|\alpha_{n,j}(u_j)|}{u_j(1-u_j)} \sum_{k=1}^d |u_k - w_k| \dd \nu_{\bm u}^{\sss \Xc_n}(\bm w) \leq 2 L \sup_{u_j \in [\delta_n,1-\delta_n]} \frac{|\alpha_{n,j}(u_j)|}{u_j(1-u_j)} \times \sum_{k=1}^d \sup_{\bm u \in [0,1]^d} \int_{[0,1]^d} |u_k - w_k| \dd \nu_{\bm u}^{\sss \Xc_n}(\bm w).
$$
The second factor is $O(n^{-\gamma/2})$ almost surely by H\"older's inequality and Condition~\ref{cond:var:W} (see the treatment of the term $J_{n,j}$ in the proof of Lemma~\ref{lem:bias}) while the first factor is, with probability one, $O(\delta_n^{-1/2} b_n)$ with $b_n = (\log n)^{1/2} \log \log n$ since, from \cite{Mas81} (see also the proof of Proposition 4.2 in \cite{Seg12}), with probability one, for all $n$ sufficiently large, $|\alpha_{n,j}(u_j)| \leq \{ u_j(1-u_j) \}^{1/2} b_n$ for all $u_j \in [0,1]$. Hence, the second term on the right-hand side of~\eqref{eq:Rnj:ineq} is $O(\delta_n^{-1/2} b_n n^{-\gamma/2}) = O(n^{(\xi-\gamma)/2} \log \log n)$ almost surely, which implies that $R_{n,j} = O(n^{-\xi/2} (\log n)^{1/2} (\log \log n)^{1/2}) +  O(n^{(\xi-\gamma)/2} \log \log n)$ almost surely and thus that
\begin{align*}
 Q_n + R_n &= O(n^{-\gamma + 2\xi} (\log \log n)^{1/2}) + O(n^{-\xi/2}(\log n)^{1/2}) + O(n^{-\xi/2} (\log n)^{1/2} (\log \log n)^{1/2}) +  O(n^{(\xi-\gamma)/2} \log \log n) \\
      &=O(n^{-\gamma + 2\xi} (\log \log n)^{1/2}) +  O(n^{-\xi/2} (\log n)^{1/2} (\log \log n)^{1/2})
\end{align*}
almost surely, since, by construction, $\xi < \gamma/2$ which implies that $n^{(\xi-\gamma)/2} < n^{-\xi/2}$. Some thought finally reveals that the best rate is obtained by taking $\xi = 2\gamma/5$ which gives the rate in~\eqref{eq:stoc}.
\end{proof}

\bibliographystyle{myjmva}
\bibliography{biblio}

\end{document}